\documentclass[10pt]{article}
\usepackage[a4paper]{anysize}\marginsize{3.5cm}{3.5cm}{1.3cm}{2cm}
\pdfpagewidth=\paperwidth \pdfpageheight=\paperheight
\usepackage{amsfonts,amssymb,amsthm,amsmath,eucal}
\usepackage{pgf}
\usepackage{bbm,array}
\usepackage{tikz} 
\usepackage{float}
\usepackage{subfigure}
\usepackage{caption}
\usetikzlibrary{arrows}
\usepackage{mathbbol}
\usepackage{url}


\usepackage{hyperref}
\hypersetup{
    colorlinks,
    citecolor=black,
    filecolor=black,
    linkcolor=black,
    urlcolor=black
}


\theoremstyle{plain}
\newtheorem{thm}{Theorem}[section]
\newtheorem{theorem}[thm]{Theorem}
\newtheorem*{theoremA}{Theorem A}

\theoremstyle{definition}
\newtheorem{definition}[thm]{Definition}
\newtheorem{remark}[thm]{Remark}

\newtheorem{question}[thm]{Question}


\newcommand\be{\begin{eqnarray*}}
\newcommand\ee{\end{eqnarray*}}

\newcommand\C{\mathbb C}

\newcommand\K{\mathbb K}

\newcommand\cala{{\mathcal A}}
\newcommand{\calaif}{\mathcal{IF}}
\newcommand{\calarf}{\mathcal{RF}}
\newcommand{\PP}{\mathbb{P}}

\newcommand\newop[2]{\def#1{\mathop{#2}\nolimits}}
\newop\mod{mod}
\newop\Der{Der}
\newop\pdeg{pdeg}
\newop\log{log}
\newop\ord{ord}
\newop\Gal{Gal}
\newop\SL{SL}
\newop\GL{GL}
\newop\Bl{Bl}
\newop\mult{mult}
\newop\mass{mass}
\newop\div{div}
\newop\codim{codim}
\newop\Jac{Jac}
\newop\sing{sing}
\newop\vdim{vdim}
\newop\edim{edim}
\newop\Ass{Ass}
\newop\size{size}
\newop\areg{areg}
\newop\asreg{asreg}
\newop\satdeg{satdeg}
\newop\supp{supp}
\newop\gin{gin}
\newop\ini{in}
\newop\vol{vol}
\newop\sat{sat}
\newop\length{length}
\newop\depth{depth}
\newop\characteristic{char}

\usepackage{tcolorbox}
\definecolor{mycolor}{rgb}{0.122, 0.435, 0.698}
\newtcolorbox{mybox}{colback=blue!5!white,colframe=mycolor}

\def\keywordname{{\bfseries Keywords}}%
\def\keywords#1{\par\addvspace\medskipamount{\rightskip=0pt plus1cm
\def\and{\ifhmode\unskip\nobreak\fi\ $\cdot$
}\noindent\keywordname\enspace\ignorespaces#1\par}}
\def\subclassname{{\bfseries Mathematics Subject Classification
(2000)}\enspace}
\def\subclass#1{\par\addvspace\medskipamount{\rightskip=0pt plus1cm
\def\and{\ifhmode\unskip\nobreak\fi\ $\cdot$
}\noindent\subclassname\ignorespaces#1\par}}

\captionsetup[table]{labelformat=simple, labelsep=none}
\captionsetup[figure]{labelformat=simple, labelsep=none}

\begin{document}

\author{Marek Janasz}
\title{On the containment problem and sporadic simplicial line arrangements}
\date{\today}
\maketitle
\thispagestyle{empty}

\begin{abstract}

In the paper we present two examples of inductively free sporadic simplicial arrangements of $31$ lines that are non-isomorphic, which allow us to  answer negatively questions on the containment problem recently formulated by Drabkin and Seceleanu.
\keywords{simplicial arrangements, homogeneous ideals, symbolic powers} \subclass{14N20, 13A15, 52A20}
\end{abstract}
\section{Introduction}
Study of the relation between symbolic and 
ordinary power of homogeneous ideals in the polynomial ring over a given field $\mathbb{K}$ has a long history and is derived from many different problems in mathematics. In $1995$, Eisenbud and Mazur in \cite{EisenbudMazur}, 
referring to the proof of Fermat's Last Theorem, were investigating 
the so-called ''fitting ideals'' and some symbolic powers of certain associated ideals. What they 
proved, among other things, is that $I^{(2)} \subset 
\mathfrak{m}I$ in the case of perfect ideals of 
codimension $2$. They also showed that this kind of the containment
holds for several other classes of 
ideals. In $2013$, Harbourne and Huneke in \cite{HaHu} 
proposed a certain generalization and began to study the 
relation $I^{(m)} \subset \mathfrak{m}^kI^r$, and their work was continued in 
\cite{BisGrifHaNg2021,CopSusHu2017}.

Another way, which in fact ends in the investigations on
symbolic and ordinary powers of ideals, were the articles by Skoda \cite{Skoda} and Waldschmidt 
\cite{Wald}. They focused on some estimates of the 
degree of hypersurfaces in $\PP^N_{\mathbb{K}}$ passing through fixed points with prescribed multiplicities. A paper by 
Chudnovski \cite{Chud} fits into these considerations. 
Using some complex analysis tools, Chudnovski 
improved the results of Skoda and Waldschmidt in $\PP^2$ 
and he formulated a still open conjecture for the case of $\PP^N$. The~generalization of this conjecture was also given by Demailly in \cite{Dema1982}, and the combination of these conjectures is the subject of intense research 
\cite{BisGrifHaNg2021,BocHarb2010,Dum2015,DumTutaj2017,
FouliXie2018,MalSzpSzem2018}.

Using the Nagata-Zariski theorem makes it possible to relate geometric questions to algebra. Therefore, the study of containment relations between ordinary and symbolic powers of homogeneous ideals of points is a connection between conjectures formulated by Chudnovsky and Demailly. This perspective led to an increased interest in the so-called containment problem, i.e., the determination of the exponents $(m,r)$ for which the $m$-th symbolic power of a homogeneous ideal is contained in the $r$-th ordinary power of that ideal. An initial work on this topic was begun by Hochster in $1973$ \cite{Hoch73}, but the groundbreaking result was published only in $2001$ by Ein, Lazarsfeld, and Smith \cite{ELS}, where they gave a lower bound on the exponent of the symbolic power, which depends on the dimension of the space $\PP^N$. Since then, the cases unsolved by the aforementioned theorem have become the subject of intensive study, in particular the smallest case from the perspective of the magnitude of the powers, namely the containment $I^{(3)} \subset I^2$ for the ideals of reduced points in $\PP^2$. While at the beginning the researchers tried to prove that this particular containment holds for all homogeneous ideals, after the paper \cite{DumSzGas}, where the first counterexample defined over $\mathbb{C}$ has been presented, a lot of counterexamples defined over different fields have been published (see \cite{CzLamp2016,MalSzp2017}). Despite a growing number of counterexamples, the true nature of the relation between $I^{(3)}$ and $I^2$ is still unknown. In \cite{DraSec}, Drabkin and Seceleanu study reflection arrangements given by (irreducible) complex pseudoreflection groups. As a result, they give a complete description of the relation between the third symbolic power and the second ordinary power of radical ideal $J(\cala)$, which defines the singular locus of the complex reflection arrangement $\cala$. The work on this problem motivates them to state some open questions, among which they ask (\cite[Question 6.7.-6.8.]{DraSec}): \textit{Are the containments $(J(\cala))^{(2r-1)}\subseteq (J(\cala))^r$ always satisfied for any $r \geq 2$ and any hyperplane arrangement that is inductively/recursively free?} 

In the present paper we give a negative answer to these questions, namely we prove the following.

\begin{theoremA}
There are two non-isomorphic inductively free simplicial arrangements consisting of $31$ lines such that they have the same weak combinatorics, and having the property that for one arrangement the containment $(J(\cala))^{(3)}\subseteq (J(\cala))^2$ holds, but does not for the other. 
\end{theoremA}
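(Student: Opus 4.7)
The plan is to exhibit two explicit simplicial line arrangements $\cala_1,\cala_2\subset\PP^2_{\C}$, each consisting of $31$ lines, drawn from the catalogue of sporadic simplicial arrangements (for example from Cuntz's database). I would record for each arrangement the list of $31$ defining linear forms, compute their singular locus, and tabulate the multiplicities $t_2,t_3,t_4,\dots$ of intersection points, showing that these numerical data agree. This establishes that $\cala_1$ and $\cala_2$ have the same weak combinatorics.

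The next step is to verify the two ancillary properties. Inductive freeness is checked by producing, for each $\cala_i$, an inductive chain: a linear ordering $H_1,\dots,H_{31}$ of the lines such that at every stage the triple $(\cala^{(k-1)},\cala^{(k)},\cala^{(k)}\cap H_k)$ satisfies the addition-deletion hypotheses, with exponents tracked throughout. Such a chain can be searched for by computer. Non-isomorphism, despite equal weak combinatorics, is detected by comparing the intersection lattices themselves: one exhibits a combinatorial invariant (for instance, the incidence pattern of the points of highest multiplicity along a distinguished line, or the underlying simple matroid on $31$ elements) that distinguishes the two, showing that no bijection between the lines sends $L(\cala_1)$ onto $L(\cala_2)$.

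The heart of the argument is the containment analysis. For each $i$, the radical ideal $J(\cala_i)\subset\C[x,y,z]$ of the singular locus is computed as the intersection of the ideals of the singular points. To certify non-containment for one of the arrangements, it suffices to produce a single explicit homogeneous form $F\in J(\cala)^{(3)}$ of an appropriate degree and verify $F\notin J(\cala)^2$; a natural candidate is the product of all $31$ defining linear forms, whose vanishing order at each singular point equals its multiplicity, hence lies in $J^{(3)}$ once the multiplicity condition is met at every point. The positive containment for the second arrangement is verified by a direct symbolic computation (with a characteristic-zero check, or a characteristic-$p$ check combined with a semicontinuity argument) showing that every generator of $J(\cala)^{(3)}$ lies in $J(\cala)^2$.

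The main obstacle is the positive containment direction: exhibiting a single witness suffices to refute the containment, but proving it requires a complete verification, and the computation is sensitive to the specific coordinate realization of the arrangement within its moduli. A secondary difficulty is being sure that the two arrangements really are distinct points in the moduli of $31$-line simplicial arrangements rather than two embeddings of the same combinatorial type, which is why the non-isomorphism of intersection lattices (not just inequality of the underlying point configurations) must be rigorously checked.
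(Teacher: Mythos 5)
Your overall strategy coincides with the paper's: the two arrangements are the sporadic simplicial arrangements $\cala(31,2)$ and $\cala(31,3)$ from Gr\"unbaum's catalogue, both obtained from $\cala(19,1)$ by adjoining twelve lines; inductive freeness is certified exactly as you propose, by an explicit addition--deletion chain with exponents tracked at each step; equality of the weak combinatorics is read off from the intersection data ($t_2=54$, $t_3=42$, $t_4=21$, $t_5=6$, $t_6=1$, $t_8=3$); non-isomorphism is quoted from Gr\"unbaum rather than re-derived from a lattice invariant; and both the containment for $\cala(31,2)$ and the non-containment for $\cala(31,3)$ are settled by direct \verb}SINGULAR} computation (the latter citing earlier work of Janasz--Lampa-Baczy\'nska--Malara).

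There is, however, one concrete step in your plan that fails: the proposed witness for non-containment. You suggest taking the product $Q(\cala)$ of all $31$ defining linear forms, arguing that its vanishing order at each singular point equals the multiplicity of that point and hence that it lies in $J(\cala)^{(3)}$ ``once the multiplicity condition is met at every point.'' The multiplicity condition is \emph{not} met: both arrangements have $t_2=54$ double points, and at each of these $Q(\cala)$ vanishes to order exactly $2$, so $Q(\cala)\notin J(\cala)^{(3)}$. (This is why counterexamples to $I^{(3)}\subseteq I^2$ coming from line arrangements are typically built on the ideal of a sub-configuration of points of multiplicity at least three, or require a genuinely different element; the witness actually used for $\cala(31,3)$ is a degree-$33$ curve consisting of $21$ of the lines together with an irreducible curve of degree $12$, found by computer.) Without a valid element of $J(\cala)^{(3)}\setminus J(\cala)^2$ the negative half of the theorem is not established, so this part of your argument needs to be replaced by either the explicit witness or a full symbolic computation of $J(\cala(31,3))^{(3)}$ modulo $J(\cala(31,3))^2$, which is what the paper does. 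Your remaining concerns (the cost of the positive containment check and the need to verify non-isomorphism at the level of intersection lattices) are well placed and are handled in the paper by the appended \verb}SINGULAR} script and by reference to Gr\"unbaum's classification, respectively.
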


The structure of the paper is as follows. In Section \ref{sec:prem}, we recall some basic definitions and tools that we will use in the rest of this paper concerning line arrangements and symbolic powers of homogeneous ideals. In Section \ref{sec:A12k7} we give very detailed information about a family of line arrangement known as $\cala(12k+7)$, giving line equations and proving that some line arrangements from this family are inductively free. This result is used in Section \ref{sec:main}, where we prove Main Theorem of this paper. At the end, we provide our \verb}SINGULAR} code to let interested readers check the containment between $(J(\cala))^{(3)}$ and $(J(\cala))^{2}$.

\section{Preliminaries}
\label{sec:prem}
In this section we recall all necessary definitions regarding hyperplane arrangements that we will exploit in the paper. For more information regarding this subject, please consult \cite{dimca17,OrlTer92}.

Let $\K$ be a field of characteristic zero and let $V$ be a fixed vector space of dimension $\ell$ over $\K$.  Let $\{x_1,\ldots, x_{\ell}\}$ be the dual basis of $V^{*}$ associated with $V$, then the symmetric algebra $S(V^{*})$ is isomorphic to the ring of polynomials $S=\K[x_1,\ldots, x_\ell].$ 

A pair $(\cala,V)$ is called an $\ell$-arrangement of hyperplanes, i.e., this is an arrangement of dimension $(\ell - 1)$ linear subspaces in $V$. The symbol $\Phi_{\ell}$ denotes the empty $\ell$-arrangement. If the dimension is clear from the context, we use the name arrangement for short. Each hyperplane $H\in\cala$ is the kernel (up to a constant) of a linear form $l_H\in V^*$. The product of all linear forms
\[Q(\cala)=\prod_{H\in\cala}l_H\]
is called the defining polynomial of $\cala$. In the case of empty arrangements, we put $Q(\Phi_l)=1$.

By $L(\cala)$ we denote the intersection lattice of $\cala$, 
i.e., the set of all non-empty intersections of hyperplanes $H_i$ in $\cala$. Taking any $X\in L(\cala)$, a subarrangement $\cala_X$ of $\cala$ 
is called {\it localization} and it is defined as 
\[\cala_X=\{H\in \cala\:|\: X\subseteq H\}.\]
For a chosen $X$, we define a subarrangement of $\cala$ by
\[\cala^X=\{X\cap H\:: X \not\subseteq H\:  
\text{and}\: X\cap H \not = \emptyset \},\]
which we call the {\it restriction} of $\cala$ to $X$.
\begin{definition}
A  simplicial arrangement is a finite set $\cala = \{H_1, \ldots, H_{n}\}$ of (central) hyperplanes in $\mathbb{R}^{\ell}$ such that all connected components of the complement \[M(\mathcal{A}):=\mathbb{R}^\ell\setminus\bigcup_{H\in\cala}H\] are simplicial cones.
\end{definition}
Denote by $\Der_{\K}(S)$ the set of all $\K$-linear maps (derivations) 
$\theta: S \longrightarrow S$ such that for all $f,g \in S$ one has
\[\theta(fg)=f \theta(g)+g\theta(f). \]   It is known that the set $\left\{\tfrac{\partial}{\partial x_i}\right\}_{i=1}^{\ell}$ forms a (canonical) basis for $\Der_\K(S)$, i.e.,
$$\Der_\K(S) = \bigoplus_{i=1}^{\ell} S \cdot \tfrac{\partial}{\partial x_i}.$$
Any homogeneous element $0 \not = \theta \in \Der_\K(S)$ can be expressed as $\theta = \sum_{i=1}^{\ell}g_i \cdot \tfrac{\partial}{\partial x_i}$, where $g_i\in S$ are homogeneous polynomials of degree $d$. For such $\theta$ we denote by $\pdeg\theta = d$ its polynomial degree. 

For any $f\in S$ being homogeneous, we define an $S$-submodule of 
$\Der_{\K}(S)$ as \[D(f)=\{\theta\in\Der_{\K}(S):\theta(f)\in f \cdot S\}.\] 
In the case of arrangement $\cala$, we use the notation $D(\cala)$ instead of $D(Q(\cala))$.
\begin{definition}
If $D(\cala)$ is a free $S$-module, then we say that $\cala$ is a \emph{free arrangement}.
\end{definition}
Let $\cala$ be a free arrangement for which  $\{\theta_1, \ldots, \theta_\ell\}$ is a homogeneous basis of $D(\cala)$. We say that the set of integers $\exp(\cala)=\{\pdeg\theta_1,\ldots,\pdeg\theta_\ell\}$ is the set of the exponents of $\cala$.

If we denote by $\theta_E \in \Der_{\K} (S)$ the Euler derivation, then we have the decomposition of $D(\cala)$, namely
$$D(\cala) = S \cdot \theta_E \oplus D_0(\cala).$$


 
Now for an arrangement $\cala$ and fixed $H\in \cala$, it is convenient
to study triples of arrangements $(\cala,\cala^{'},\cala^{''})$ of 
arrangements, where $\cala^{'}=\cala\setminus\{H\}$ and 
$\cala^{''}=\cala^{H}.$ The next theorem is very useful in all our considerations.

\begin{thm}\label{AD1}(Addition-Deletion, see \cite{OrlTer92})
Suppose $\mathcal{A} \not = \Phi_\ell$ . 
Let $(\cala, \cala^{'}, \cala^{''})$ be
a triple. Any two of the following 
statements imply the third:
\begin{align*}
\mathcal{A} \text{ is free with } {\rm exp}(\mathcal{A}) &= 
\{ b_1 , \ldots ,b_{\ell - 1} , 
b_{\ell}\},\\
\mathcal{A}^{'} \text{ is free with } {\rm exp}( 
\mathcal{A}^{'}) & = \{b_1, \ldots , b_{\ell-
1}, b_{\ell} - 1\},\\
\mathcal{A}^{''} \text{ is free with } {\rm exp} 
(\mathcal{A}^{''}) & = \{b_1, \ldots. ,b_{\ell - 
1} \}.
\end{align*}
\end{thm}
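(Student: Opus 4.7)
The plan is to prove the Addition-Deletion theorem by building the classical three-term exact sequence that relates $D(\cala)$, $D(\cala')$, and $D(\cala'')$, and then reading off each of the three implications from a Hilbert-series / Saito-criterion bookkeeping. I would first choose coordinates so that $\alpha_H = x_\ell$, so that $Q(\cala) = x_\ell \cdot Q(\cala')$, and note that the restriction $\cala''$ lives in $H$ with coordinate ring $\overline{S} := S/(x_\ell) \cong \K[x_1,\ldots,x_{\ell-1}]$; accordingly $D(\cala'')$ sits inside $\Der_\K(\overline{S})$, viewed as a graded $S$-module via the surjection $S \to \overline{S}$.

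Next I would construct the complex
\[0 \longrightarrow D(\cala')(-1) \xrightarrow{\mu} D(\cala) \xrightarrow{\rho} D(\cala'')\]
where $\mu(\theta) = x_\ell \cdot \theta$ and $\rho$ sends $\sum_i g_i\, \partial/\partial x_i$ to $\sum_{i<\ell} \overline{g_i}\, \partial/\partial x_i$. I would check that $\mu$ really lands in $D(\cala)$ (a short computation using $\theta(Q(\cala)) = x_\ell\, \theta(Q(\cala')) + Q(\cala')\,\theta(x_\ell)$ and that $\theta(x_\ell) \in x_\ell S$ whenever $\theta \in D(\cala)$), that $\rho$ lands in $D(\cala'')$ (using the compatibility of $Q(\cala')$ modulo $x_\ell$ with $Q(\cala'')$ up to a unit in $\overline{S}$), and that the complex is exact on the left and in the middle; these are straightforward module-theoretic checks, since $\rho(\theta)=0$ forces every coefficient of $\theta$ to lie in $(x_\ell)$, after which $\theta / x_\ell$ is easily seen to belong to $D(\cala')$.

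The core argument is then that if $\rho$ is additionally surjective, one can translate freeness statements across the triple via the Hilbert-series identity $H(D(\cala),t) = t\,H(D(\cala'),t) + H(D(\cala''),t)$; recall that a free derivation module with exponents $(e_1,\ldots,e_\ell)$ has Hilbert series $(t^{e_1}+\cdots+t^{e_\ell})/(1-t)^\ell$, and the $D(\cala'')$ term carries $(1-t)^{-(\ell-1)}$ instead, reflecting that one variable is killed. A direct algebraic check shows the three exponent hypotheses in the theorem are mutually consistent with this identity. To close each of the three directions I would argue: assuming any two of the freeness statements with the specified exponents, the Hilbert-series balance forces $\mathrm{Im}(\rho)$ to have exactly the predicted graded dimension in every degree, hence $\rho$ must be surjective; the remaining module then sits in an exact sequence of graded $S$-modules whose outer terms are free with matching exponents, and Saito's criterion (together with graded Nakayama to lift a basis of $D(\cala'')$ back to $D(\cala)$) produces an explicit free basis of the third module, with the correct polynomial degrees.

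The main obstacle, and where the actual content sits, is proving surjectivity of the restriction map $\rho$: in general $\rho$ is \emph{not} surjective, which is exactly why freeness is not preserved by arbitrary deletion, and the matching-exponents hypothesis is precisely what is needed to force the cokernel to vanish. I expect that the case $(B)+(C)\Rightarrow(A)$ will be the most delicate, since one must stitch a basis of $D(\cala')$ together with lifts of a basis of $D(\cala'')$ into a candidate basis of $D(\cala)$ and then verify via Saito's criterion that the resulting Jacobian determinant is a nonzero scalar multiple of $Q(\cala) = x_\ell\,Q(\cala')$; the $x_\ell$ factor is supplied by the kernel piece $\mu$, while the $Q(\cala')$ factor comes from the lifted restricted basis, and the degree count $\sum b_i = |\cala|$ is what makes the two sides match.
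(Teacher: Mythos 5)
The paper does not actually prove Theorem~\ref{AD1}: it is Terao's Addition--Deletion theorem, quoted verbatim with a citation to Orlik--Terao, so there is no internal proof to compare against. Your outline reproduces the strategy of the standard proof in that reference: the graded exact sequence $0 \to D(\cala')(-1) \to D(\cala) \to D(\cala'')$ with the multiplication-by-$\alpha_H$ map and the restriction map $\rho$, the Hilbert-series identity $H(D(\cala),t) = t\,H(D(\cala'),t) + H(D(\cala''),t)$, and Saito's criterion. The sequence, its left- and middle-exactness, and the consistency check of the three exponent sets against that identity are all correct as you state them.

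The gap sits exactly where you locate the difficulty but then wave it away. You assert that ``the Hilbert-series balance forces $\mathrm{Im}(\rho)$ to have exactly the predicted graded dimension in every degree, hence $\rho$ must be surjective.'' But the balance identity is a \emph{consequence} of surjectivity, not a tool for proving it: without surjectivity the sequence only yields the coefficientwise inequality $H(D(\cala),t) \le t\,H(D(\cala'),t) + H(D(\cala''),t)$. In the direction (A)$+$(B)$\Rightarrow$(C) you can compute the Hilbert series of $\mathrm{Im}(\rho) \cong D(\cala)/\alpha_H D(\cala')$, but you have no a priori control of $H(D(\cala''),t)$ --- not even of $|\cala''|$ --- so a dimension count cannot show $\mathrm{Im}(\rho) = D(\cala'')$. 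In the direction (B)$+$(C)$\Rightarrow$(A) the unknown series is that of $D(\cala)$ itself, so again there is no ``balance'' to exploit; one must first produce elements of $D(\cala)$ lifting a basis of $D(\cala'')$, which presupposes the surjectivity you are trying to derive. The standard proof closes this circle with genuinely additional input: a depth (Auslander--Buchsbaum) argument showing that $D(\cala)/\alpha_H D(\cala')$ is a free module over $S/(\alpha_H)$ whenever $\cala$ and $\cala'$ are both free, the divisibility of the determinant $\det\bigl(\rho(\theta_i)(x_j)\bigr)$ by $Q(\cala'')$ to pin down $|\cala''| = b_1 + \cdots + b_{\ell-1}$, and Terao's basis-lifting lemma for the addition direction. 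Without supplying these, the sketch does not compile into a proof of any of the three implications.
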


In this paper we deal with line 
arrangements $\cala$ defined over the complex numbers, therefore we will use the following reformulation of Theorem \ref{AD1}.
\begin{thm}\label{DiMar}
Let $\cala$ be a line 
 arrangement in $\PP^2_{\C}$ and $H \in  \cala$. Let $\cala' 
 :=\cala\setminus\{ H \}$. If the 
 following conditions hold:
	\begin{enumerate}
		\item[(1)] $\cala'$ is free and has the exponents ${\rm exp}(\mathcal{A}') = \{1, a, b\}$,
		\item[(2)]$|Sing(\cala)\cap H| = b + 1$ (or $a + 1$, respectively),
	\end{enumerate}
	then $\cala$ is free with the exponents ${\rm exp}(\cala)=\{1, a + 1, b\}$ (or ${\rm exp}(\cala) = \{1, a, b + 1\}$, respectively).
\end{thm}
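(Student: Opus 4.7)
The plan is to deduce the statement directly from the general Addition-Deletion theorem (Theorem \ref{AD1}) applied to the triple $(\cala, \cala^{'}, \cala^{''})$ with $\cala^{''} = \cala^H$. The key simplification in the line-arrangement setting is that, after coning $\cala$ up to a central $3$-arrangement in $\C^3$, the restriction $\cala^H$ lives inside the $2$-dimensional vector space $H$, and every central arrangement in dimension $2$ is automatically free with known exponents, so condition (1) together with a control on $|\cala^H|$ will supply two of the three hypotheses of Theorem \ref{AD1}.

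First I would pin down the exponents of $\cala^{''}$. A central arrangement of $n$ distinct lines through the origin in a $2$-dimensional space is free with exponents $\{1, n-1\}$ (by Saito's criterion, with basis $\theta_E$ of $\pdeg = 1$ and $\theta = (\partial_{x_2}Q)\partial_{x_1} - (\partial_{x_1}Q)\partial_{x_2}$ of $\pdeg = n-1$), so it suffices to count $|\cala^H|$. A point $p \in H$ contributes a distinct element of $\cala^H$ precisely when some line $H' \in \cala \setminus \{H\}$ meets $H$ at $p$; since $H$ itself already passes through $p$, this is the same as saying that $p$ is a singular point of $\cala$ lying on $H$. Hence $|\cala^H| = |Sing(\cala) \cap H|$, and under hypothesis (2) in the form $b+1$ we obtain that $\cala^{''}$ is free with $\exp(\cala^{''}) = \{1, b\}$.

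Next I would match the data to Theorem \ref{AD1} with $\ell = 3$. Setting $b_1 = 1$, $b_2 = b$, and $b_3 = a+1$, the hypotheses read $\exp(\cala^{'}) = \{1, a, b\} = \{b_1, b_2, b_3 - 1\}$ and $\exp(\cala^{''}) = \{1, b\} = \{b_1, b_2\}$, so two of the three statements of the Addition-Deletion theorem are in place. The third statement then delivers that $\cala$ is free with $\exp(\cala) = \{b_1, b_2, b_3\} = \{1, a+1, b\}$, as claimed. The parallel case $|Sing(\cala) \cap H| = a+1$ is obtained by interchanging the roles of $a$ and $b$ throughout, producing $\exp(\cala) = \{1, a, b+1\}$.

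I do not expect a substantive obstacle: the content reduces to recognising that, in dimension two, the restriction $\cala^H$ is automatically free, so condition (2) is exactly the information needed to read off its second exponent and invoke Theorem \ref{AD1}. The only item requiring care is the bookkeeping that decides which of $a$ or $b$ gets incremented in each of the two parallel cases.
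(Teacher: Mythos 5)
Your proof is correct and is exactly the intended derivation: the paper states this result without proof as a "reformulation" of Theorem \ref{AD1} (citing \cite{DiMar}), and the content of that reformulation is precisely your two observations — that the restriction $\cala^H$ is a rank-two central arrangement, hence automatically free with exponents $\{1,\,|\cala^H|-1\}$, and that $|\cala^H|=|Sing(\cala)\cap H|$ — after which Addition--Deletion supplies the conclusion. The bookkeeping in both parallel cases checks out.
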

In the sequel, we focus on the following definition.

\begin{definition}(\cite[Definition 4.53]{OrlTer92}).\label{def:IF}
	The class $\calaif$ of inductively free arrangements is the smallest class of arrangements which satisfies both conditions:
\begin{enumerate}\itemsep0pt \parskip0pt \parsep0pt
		\item[(1)] $\Phi_{\ell}\in\calaif$  for $\ell\geq 0$,
		\item[(2)] if there exists $H\in\cala$ such that $\cala''\in\calaif $, 
		$\cala'\in\calaif$, and ${\rm exp}(\cala'')\subset {\rm exp}(\cala')$,
		then $\cala\in\calaif$.
	\end{enumerate}
\end{definition}

\section{Examples of inductively free arrangements}

\label{sec:A12k7}
The main object of our considerations is a special family of line arrangements, denoted in the literature by $\cala(12k+7)$. This infinite family was originally described in the paper by Gr{\"u}nbaum \cite{Gru98}. Here, we recall this construction and its basic properties. 

For fixed $k$, each element of the family consists of exactly $12k+7$ lines, including the line at infinity $z=0$. The equations of these lines are given explicitly in Table \ref{tb:A12k7}.

{\renewcommand{\arraystretch}{1.0}\begin{table}[h]
		\begin{center}
			\begin{tabular}{ll} \hline
				\multicolumn{2}{c}{$\cala(12k+7)$} \\
				\hline\hline
				
				$2x-eiz$, &   \\
				
				$ x-ey+iez$, &  for $i \in \{-(k+1),-k,\ldots,-1,0,1,\ldots,k,k+1\}$\\
				
				$ x+ey-iez$, &     \\\hline
				
				$2y-jz$, &  \\

 				$ex-y+jz$, & for $j \in \{-(k-1),-(k-2),\ldots,-1,0,1,\ldots,k-2,k-1\}$\\
				
				$ex+y-jz$, &  \\\hline
				
				$z$\\\hline\hline
			\end{tabular}
\end{center}
		\caption{$\:$ Equations of lines of $\cala(12k+7)$.}
		\label{tb:A12k7}
	\end{table}
}


The arrangements $\cala(19)$ and $\cala(31)$ are exactly the sporadic simplicial arrangements $A(19,1)$ and $A(31,2)$ listed in Gr{\"u}nbaum's catalogue \cite{Gru}. From \cite{DiMar}, we know that these are free arrangements with the exponents ${\rm exp}(\cala(19))= \{1, 7, 11\}$ and ${\rm exp}(\cala(31)) = \{1, 13, 17\}$. 
We start with the following combinatorial observation that is crucial for our further considerations. According to our best knowledge, this observation is not known in the literature.
\begin{theorem}\label{thFree1} 
The arrangements of line 
$\cala(19,1)$ and $\cala(31,2)$ are 
inductively free.
\end{theorem}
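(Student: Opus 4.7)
The plan is to verify inductive freeness directly from Definition \ref{def:IF} by exhibiting, for each of the two arrangements, an explicit filtration
$$\Phi_3 \subset \cala_3 \subset \cala_4 \subset \cdots \subset \cala_n = \cala(n), \qquad n \in \{19,31\},$$
in which each $\cala_{i+1}$ differs from $\cala_i$ by a single line and each $\cala_i$ is free with a prescribed triple of exponents. Because the restriction of a line arrangement to a single line is a central $2$-arrangement, and such arrangements are automatically inductively free, the non-trivial condition at each step is only the exponent condition singled out by Theorem \ref{DiMar}.

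Operationally, I would specialize Table \ref{tb:A12k7} to $k=1$ and $k=2$, start the chain with three lines of $\cala(n)$ in general position so that $\exp(\cala_3)=\{1,1,1\}$, and then adjoin the remaining lines one at a time. With $\exp(\cala_i)=\{1,a,b\}$ in hand, I look for a line $H$ among those still to be added whose number of singular intersection points with $\cala_i$ equals $a+1$ or $b+1$; Theorem \ref{DiMar} then produces $\cala_{i+1}$ with exponents $\{1,a,b+1\}$ or $\{1,a+1,b\}$. The chain must end at $\{1,7,11\}$ for $\cala(19)$ and $\{1,13,17\}$ for $\cala(31)$, in agreement with the values computed in \cite{DiMar}; this fixes the total number of admissible increments on each side of the exponent pair.

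The principal difficulty is combinatorial: one must produce an ordering of the $19$ (respectively $31$) lines that realizes the required exponent jumps \emph{at every intermediate stage}, not only at the end. Late in the chain one of the two non-trivial exponents becomes large, so the lines appended last have to meet the current arrangement in many distinct singular points; this is the main constraint driving the search. I would exploit the block structure of Table \ref{tb:A12k7} and the natural symmetries of the Gr\"unbaum family $\cala(12k+7)$ to guide the search, treating the line at infinity $\{z=0\}$ separately. The actual bookkeeping of intersection multiplicities along the chain is elementary, amounting to detecting collinearities among the intersection points of the linear forms of Table \ref{tb:A12k7}: for $\cala(19)$ it is short enough to carry out by hand, whereas for $\cala(31)$ it is most conveniently automated, using for instance the \texttt{SINGULAR} code referenced in the introduction. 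Once such filtrations are exhibited, inductive freeness follows by a direct induction from condition (2) of Definition \ref{def:IF}.
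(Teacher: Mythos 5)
Your strategy is exactly the one the paper uses: build a one-line-at-a-time filtration from $\Phi_3$ up to $\cala(19)$ and then on to $\cala(31)$, applying Theorem \ref{DiMar} at each step, and exploiting the fact that the restriction $\cala''$ is a rank-two arrangement, hence automatically inductively free, so that only the exponent/incidence condition needs to be checked. The paper's proof is nothing more than the explicit realization of this plan: Table \ref{tab:19Free} records an admissible ordering of the $19$ lines together with $\exp(\cala')$ and $\exp(\cala'')$ at every stage, and Table \ref{tab:Ln} with the subsequent diagram does the same for the twelve further lines completing $\cala(31)$.

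The gap is that you never actually exhibit such an ordering, and for this theorem the ordering \emph{is} the proof. Freeness of $\cala(19)$ and $\cala(31)$ with exponents $\{1,7,11\}$ and $\{1,13,17\}$ is already known from \cite{DiMar}; since free arrangements need not be inductively free (recall the strict inclusions inductively free $\subsetneq$ recursively free $\subsetneq$ free cited in the paper's final remark), the existence of a chain realizing an admissible exponent jump at \emph{every} intermediate stage is precisely the assertion to be established, and it could in principle fail even though the endpoint is free. You correctly identify producing the ordering as ``the principal difficulty'' and sketch a search heuristic guided by the block structure and symmetries of Table \ref{tb:A12k7}, but describing how one would look for a witness is not the same as showing that one exists. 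To close the gap you must write down the two chains explicitly and verify the incidence count $|{\rm Sing}(\cala)\cap H| \in \{a+1, b+1\}$ at each of the $31$ addition steps, which is exactly the content of the paper's tables.
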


\begin{proof}
We will divide our proof of this theorem into two steps. In the first step, we will show that the arrangement $\cala(19,1)$
is inductively free. For this purpose, we present Table \ref{tab:19Free} below, where we deliver the sequences of the exponents for 
$\cala^{'}$, the equation of each line that we 
add to the arrangement starting from $\Phi_3$, and then the exponents
of ${\rm exp}(\cala^{''})$. Each subsequent row of the table 
allows us to verify the conditions contained in Theorem \ref{DiMar}.
{\renewcommand{\arraystretch}{1.0}\begin{table}[h]
\begin{center}
\begin{tabular}{lll|lll} \\
\hline 
$\exp\:\cala^{'}$ & $\ell_i$ &  $\exp\:\cala^{''}$ &
$\exp\:\cala^{'}$ & $\ell_i$ &  $\exp\:\cala^{''}$
\\ \hline 
$\{0,0,0\}$ & $\Phi_3$ & $\{0,0\}$ &  $\{1,4,5\}$ & $\ell_{10}: x+ey+ez$ & $\{1,5\}$ \\
$\{0,0,1\}$ & $\ell_1: z $ & $\{0,1\}$ & $\{1,5,5\}$ & $\ell_{11}: x-ey-ez$ & $\{1,5\}$   \\
$\{0,1,1\}$ & $\ell_2: ex+y$ & $\{1,1\}$ &$\{1,5,6\}$ & $\ell_{12}: x+ey-ez$ & $\{1,5\}$ \\
$\{1,1,1\}$ & $\ell_3: ex-y$ & $\{1,1\}$ & $\{1,5,7\}$ & $\ell_{13}: x-ey+ez$ & $\{1,7\}$ \\
$\{1,1,2\}$ & $\ell_4: y$ & $\{1,1\}$ & $\{1,6,7\}$ & $\ell_{14}: 2x-2ez$ & $\{1,7\}$ \\
$\{1,1,3\}$ & $\ell_5: x$ & $\{1,1\}$ & $\{1,7,7\}$ & $\ell_{15}: x+ez$ & $\{1,7\}$\\
$\{1,1,4\}$ & $\ell_6: x+ey$ & $\{1,1\}$ & $\{1,7,8\}$ & $\ell_{16}: x+ey+2ez$ & $\{1,7\}$\\
$\{1,1,5\}$  & $\ell_7: x-ey$ & $\{1,5\}$ & $\{1,7,9\}$ & $\ell_{17}: x-ey+2ez$ & $\{1,7\}$ \\
$\{1,2,5\}$ & $\ell_8: 2x-ez$ & $\{1,5\}$ & $\{1,7,10\}$ & $\ell_{18}: x-ey-2ez$ & $\{1,7\}$\\
$\{1,3,5\}$  & $\ell_9: 2x+ez$ & $\{1,5\}$ & $\{1,7,11\}$ & $\ell_{19}: x+ey-2ez$ & $\{1,7\}$\\ \hline
\end{tabular}
\caption{. List of
 the exponents for arrangements building $\cala(19)$, 
 where $e=\sqrt{3}$. }\label{tab:19Free}
\end{center}
\end{table}
}

Based on the last row in Table \ref{tab:19Free}, we see 
that the arrangement $\cala(19)$ is a free with the 
exponents ${\rm exp}(\mathcal{A}(19))= \{1, 7, 11\}$.

For the second part of the proof, we apply Theorem
\Ref{DiMar} to arrangement $\mathcal{A}(19)$ by adding suitably chosen lines that are indicated in Table \Ref{tab:Ln} below.
\newpage
\begin{table}[!ht]
\begin{center} 
\begin{tabular}[b]{cl|cl}
\hline
$\ell_{19+i}$ & Equations of lines & $\ell_{19+i}$  &  
Equations of lines \\
\hline
$\ell_{20}:$ & $ex+y-z$ &  
$\ell_{26}:$ & $x+ey+3ez$ \\     
$\ell_{21}:$ & $ex+y+z$ &  
$\ell_{27}:$ & $x+ey-3ez $ \\
$\ell_{22}:$ & $ex-y+z$ & 
$\ell_{28}:$ & $x-ey+3ez $\\       
$\ell_{23}:$ & $ex-y-z$ & 
$\ell_{29}:$ &$x-ey-3ez $\\       
$\ell_{24}:$ & $2y-z$ & 
$\ell_{30}:$ & $2x+3ez$\\
$\ell_{25}:$ & $2y+z$ 
& $\ell_{31}:$ & $2x-3ez$\\
\hline
\end{tabular}
\caption{. Equations of lines 
$\ell_{19+i} $ with $i \in \{1, ..., 12\}$.}
\label{tab:Ln}
\end{center}
\end{table}

Attaching successively lines $\ell_{20},...,\ell_{31}$ to $\mathcal{A}(19)$, and using each time Theorem 
\ref{DiMar}, we conclude that the obtained 
arrangements are free. All the details describing our procedure are presented in the following diagram below:

{ \footnotesize
\[\begin{array}{c}\label{schemat}
\cala(19) \\
{\rm exp}(\cala) = \{1,6,11\}
\end{array}
\longrightarrow
\begin{array}{c}
\cala(19)\cup\{\ell_{20}\} \\
{\rm exp}(\cala) = \{1,8,11\}
\end{array}
\longrightarrow\ldots\]
\[
\longrightarrow
\begin{array}{c}
\cala(19)\cup\{\ell_{20},\ldots, \ell_{24}\} \\
{\rm exp}(\cala) = \{1,11,11\}
\end{array}
\longrightarrow
\begin{array}{c}
\cala(19)\cup\{\ell_{20},\ldots, \ell_{25}\} \\
{\rm exp}(\cala) = \{1,11,12\}
\end{array}
\longrightarrow
\]
\[
\longrightarrow
\begin{array}{c}
\cala(19)\cup\{\ell_{20},\ldots, \ell_{26}\} \\
{\rm exp}(\cala) = \{1,11,13\}
\end{array}
\longrightarrow
\begin{array}{c}
\cala(19)\cup\{\ell_{20},\ldots, \ell_{27}\} \\
{\rm exp}(\cala) = \{1,12,13\}
\end{array}
\]

\[\longrightarrow
\begin{array}{c}
\cala(19)\cup\{\ell_{20},\ldots,\ell_{28}\}\\
{\rm exp}(\cala) = \{1,13,13\}
\end{array}
\longrightarrow 
\begin{array}{c}
\cala(19)\cup\{\ell_{20},\ldots,\ell_{29}\} \\
{\rm exp}(\cala) = \{1,13,14\}
\end{array}
\longrightarrow\]
\[
\ldots
\longrightarrow
\begin{array}{c}
\cala(19)\cup\{\ell_{20},\ldots, \ell_{31}\}\\
{\rm exp}(\cala) = \{1,13,17\}
\end{array}.
\]}
Thus we obtain that the arrangement
\[\cala(31):=\cala(19)\cup\{\ell_{20},\ldots, 
\ell_{31}\}\] is 
inductively free with the exponents ${\rm exp}(\cala(31)) = \{ 1,13,17\},$
which completes the proof.
\end{proof}

\section{Inductively free arrangements and a 
counterexample to the containment problem.}
\label{sec:main}

The line arrangement $\cala(31)$ from the previous part of our paper turns out to be very important in the context of an open problem in the so-called containment problems for symbolic powers of homogeneous ideals. Let us recall here the definition of symbolic powers.

\begin{definition}
 Let $I \subseteq S$
 be a homogeneous ideal. For a fixed positive
integer $m$, we define the $m$-th symbolic power of the ideal $I$ as
\begin{displaymath}
I^{(m)}= S \cap \Big( \bigcap_{Q \in
\text{Ass}(I)} I^{m}_{Q} \Big),
\end{displaymath}
where $Ass(I)$ denotes the set of all prime ideals associated with I and $I_Q$
denotes the location of $I$ at $Q$.
\end{definition}
Reader unfamiliar with symbolic power and containment problem is referred to \cite{SzeSzp}.

In \cite{DraSec}, Drabkin and Seceleanu study arrangements of hyperplanes that come from irreducible complex reflection groups, proving that for some cases we have the failure of the containment
\begin{equation}
\label{fail}
(J(\cala))^{(3)} \not\subseteq (J(\cala))^2,
\end{equation}
where $J(\mathcal{A})$ denotes the radical ideal associated with the configuration of all intersection points of a given arrangement $\mathcal{A}$ and $(J(\cala))^{(3)}$ denotes the third symbolic power of $J(\cala)$.
In the light of the results obtained in \cite{DraSec}, it is natural to ask the following question.

\begin{question}\label{pyt:SD_2}
(\cite[Question 6.7.]{DraSec})
Are the containments 
$(J(\cala))^{(2r-1)}\subseteq 
(J(\cala))^r$ always satisfied 
for any $r \geq 2$ and
any hyperplane arrangement that is 
inductively free?
\end{question}
Here we answer \textbf{negative} to this question for $r=2$ and by taking the whole singular locus of the arrangement $\mathcal{A}(31)$ described in the previous section. In fact, we will show even more, namely we will present two inductively free arrangements of $31$ lines such that for one arrangement the condition \eqref{fail} holds, but for the second one it does not. In order to do so, let us present briefly a construction of the second arrangement of $31$ lines. Surprisingly, this is a simplicial line arrangement which is denoted by $\cala(31,3)$ in Grünbaum's catalogue \cite{Gru}. Here are the details.

Consider ten lines given by:
\[x\pm aez,\:2y\pm az,\:2x\pm bz,\]
where by $e=\sqrt{3}$, $a\in\{0,1\}$, 
and $b\in\{0,1,3\}$. The visualization in the affine part of the projective plane of these ten lines is presented in 
Figure~\ref{fig:tenlines}.
\begin{figure}[H]
	\centering
	\begin{tikzpicture}
	[line cap=round,line join=round,x=1.0cm,y=1.0cm, scale=0.7]
	\clip(-6.9,-6) rectangle (6,5);
	\draw [line width=1.pt] (0.8660254037844386,-4) -- (0.8660254037844386,4.8);
	\draw [line width=1.pt] (1.7320508075688772,-5) -- (1.7320508075688772,3.792672179572946);
	\draw [line width=1.pt] (3.4641016151377544,-4) -- (3.4641016151377544,4.8);
	\draw [line width=1.pt] (-0.8660254037844386,-4) -- (-0.8660254037844386,4.8);
	\draw [line width=1.pt] (0.,-5) -- (0.,3.792672179572946);
	\draw [line width=1.pt] (-1.7320508075688772,-5) -- (-1.7320508075688772,3.792672179572946);
	\draw [line width=1.pt] (-3.4641016151377544,-4) -- (-3.4641016151377544,4.8);
	\draw [line width=1.pt,domain=-10:11.621005370643537] plot(\x,{(--1.-0.*\x)/1.});
	\draw [line width=1.pt,domain=-10:11.621005370643537] plot(\x,{(-0.-0.*\x)/1.});
	\draw [line width=1.pt,domain=-10:11.621005370643537] plot(\x,{(-1.-0.*\x)/1.});
	\begin{scriptsize}
	\draw[color=black] (0,4.4) node {$x=0$};
	\draw[color=black] (-0.9,-4.5) node {$2x=-ez$};
	\draw[color=black] (0.95,-4.5) node {$2x=ez$};
	\draw[color=black] (-1.9,4.4) node {$x=-ez$};
	\draw[color=black] (1.95,4.4) node {$x=ez$};
	\draw[color=black] (-3.6,-4.5) node {$2x=-3ez$};
	\draw[color=black] (3.6,-4.5) node {$2x=3ez$};
	\draw[color=black] (-5.9,-0.3) node {$2y=0$};
	\draw[color=black] (-5.9,0.7) node {$2y=z$};
	\draw[color=black] (-5.8,-1.3) node {$2y=-z$};
	\end{scriptsize}
	\end{tikzpicture}
	\caption{. A set of $10$ 
 lines inducing 
 realizations of $\cala(31,3)$.
 }
	\label{fig:tenlines}
\end{figure}
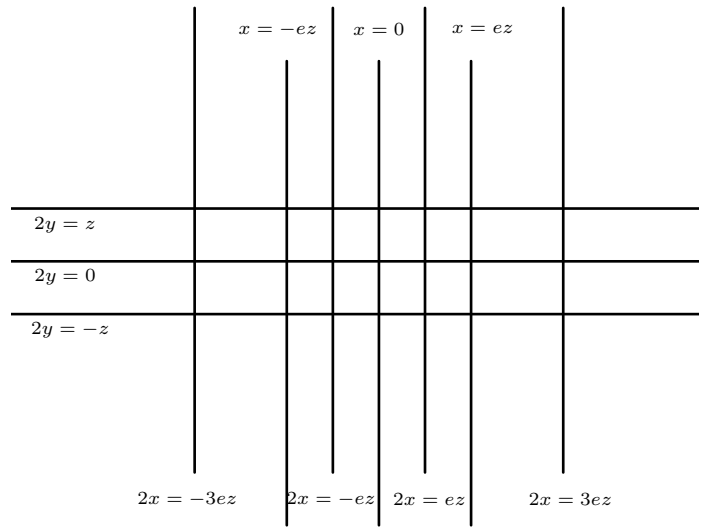
Now we perform two rotation of these lines (affinely), firstly by angle $60^\circ$, then by $120^\circ$, around the point $(0,0)$. In this way we obtain $30$ lines. Finally, to obtain $31$ lines, we take to our arrangement the line at infinity $z=0$. The resulting configuration is shown in Figure~\ref{fig:A31_3}. 

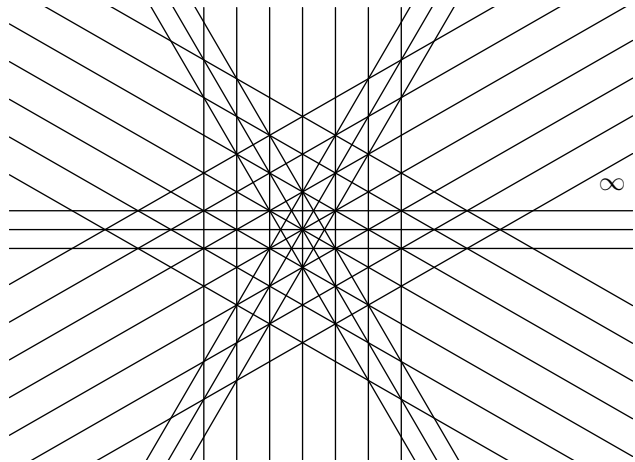
\begin{figure}[!ht]
\begin{center}
\begin{tikzpicture}[line cap=round,line join=round,x=1.0cm,y=1.0cm,scale=0.5]
\clip(-7.716171428571437,-6.1048) rectangle (8.68681904761903,5.8868761904761895);
\draw [line width=0.5pt,domain=-7.716171428571437:8.68681904761903] plot(\x,{(-0.-1.7320508075688772*\x)/1.});
\draw [line width=0.5pt,domain=-7.716171428571437:8.68681904761903] plot(\x,{(-0.-1.7320508075688772*\x)/-1.});
\draw [line width=0.5pt,domain=-7.716171428571437:8.68681904761903] plot(\x,{(-0.-0.*\x)/1.});
\draw [line width=0.5pt] (0.,-6.1048) -- (0.,5.8868761904761895);
\draw [line width=0.5pt,domain=-7.716171428571437:8.68681904761903] plot(\x,{(-0.-1.*\x)/1.7320508075688772});
\draw [line width=0.5pt,domain=-7.716171428571437:8.68681904761903] plot(\x,{(-0.-1.*\x)/-1.7320508075688772});
\draw [line width=0.5pt] (0.8660254037844386,-6.1048) -- (0.8660254037844386,5.8868761904761895);
\draw [line width=0.5pt] (-0.8660254037844386,-6.1048) -- (-0.8660254037844386,5.8868761904761895);
\draw [line width=0.5pt,domain=-7.716171428571437:8.68681904761903] plot(\x,{(-1.7320508075688772-1.*\x)/1.7320508075688772});
\draw [line width=0.5pt,domain=-7.716171428571437:8.68681904761903] plot(\x,{(--1.7320508075688772-1.*\x)/-1.7320508075688772});
\draw [line width=0.5pt,domain=-7.716171428571437:8.68681904761903] plot(\x,{(--1.7320508075688772-1.*\x)/1.7320508075688772});
\draw [line width=0.5pt,domain=-7.716171428571437:8.68681904761903] plot(\x,{(-1.7320508075688772-1.*\x)/-1.7320508075688772});
\draw [line width=0.5pt] (1.7320508075688772,-6.1048) -- (1.7320508075688772,5.8868761904761895);
\draw [line width=0.5pt] (-1.7320508075688772,-6.1048) -- (-1.7320508075688772,5.8868761904761895);
\draw [line width=0.5pt,domain=-7.716171428571437:8.68681904761903] plot(\x,{(-3.4641016151377544-1.*\x)/1.7320508075688772});
\draw [line width=0.5pt,domain=-7.716171428571437:8.68681904761903] plot(\x,{(-3.4641016151377544-1.*\x)/-1.7320508075688772});
\draw [line width=0.5pt,domain=-7.716171428571437:8.68681904761903] plot(\x,{(--3.4641016151377544-1.*\x)/-1.7320508075688772});
\draw [line width=0.5pt,domain=-7.716171428571437:8.68681904761903] plot(\x,{(--3.4641016151377544-1.*\x)/1.7320508075688772});
\draw [line width=0.5pt,domain=-7.716171428571437:8.68681904761903] plot(\x,{(--1.-1.7320508075688772*\x)/1.});
\draw [line width=0.5pt,domain=-7.716171428571437:8.68681904761903] plot(\x,{(-1.-1.7320508075688772*\x)/1.});
\draw [line width=0.5pt,domain=-7.716171428571437:8.68681904761903] plot(\x,{(-1.-1.7320508075688772*\x)/-1.});
\draw [line width=0.5pt,domain=-7.716171428571437:8.68681904761903] plot(\x,{(--1.-1.7320508075688772*\x)/-1.});
\draw [line width=0.5pt,domain=-7.716171428571437:8.68681904761903] plot(\x,{(--1.-0.*\x)/2.});
\draw [line width=0.5pt,domain=-7.716171428571437:8.68681904761903] plot(\x,{(-1.-0.*\x)/2.});
\draw [line width=0.5pt,domain=-7.716171428571437:8.68681904761903] plot(\x,{(-5.196152422706632-1.*\x)/1.7320508075688772});
\draw [line width=0.5pt,domain=-7.716171428571437:8.68681904761903] plot(\x,{(--5.196152422706632-1.*\x)/1.7320508075688772});
\draw [line width=0.5pt,domain=-7.716171428571437:8.68681904761903] plot(\x,{(-5.196152422706632-1.*\x)/-1.7320508075688772});
\draw [line width=0.5pt,domain=-7.716171428571437:8.68681904761903] plot(\x,{(--5.196152422706632-1.*\x)/-1.7320508075688772});
\draw [line width=0.5pt] (-2.598076211353316,-6.1048) -- (-2.598076211353316,5.8868761904761895);
\draw [line width=0.5pt] (2.598076211353316,-6.1048) -- (2.598076211353316,5.8868761904761895);
\draw (7.545961904761889,1.6037333333333353) node[anchor=north west] {$\infty$};
\end{tikzpicture}
\end{center}
\caption{. 
An affine realization of the arrangement 
$\cala(31,3)$. The symbol $\infty$ 
denotes the line at infinity $z=0$.
}
	\label{fig:A31_3}
\end{figure}
The following statement shows 
that the answer to Question 
\ref{pyt:SD_2} is negative.

\begin{theorem}
\label{thm:A313IFcounter}
The arrangement $\cala(31,3)$ is inductively free and one has
$(J(\cala(31,3)))^{(3)}\not \subseteq 
(J(\cala(31,3)))^2$.
\end{theorem}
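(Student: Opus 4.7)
My plan is to tackle the two assertions of Theorem~\ref{thm:A313IFcounter} independently, since they require quite different techniques.

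For the inductive freeness of $\cala(31,3)$, the strategy mirrors the two-step tabular argument used in the proof of Theorem~\ref{thFree1}. Starting from $\Phi_3$, I would construct an explicit ordering $\ell_1,\ldots,\ell_{31}$ of the lines of $\cala(31,3)$ and, at stage $i$, apply Theorem~\ref{DiMar} to the pair $(\cala_{i-1},\cala_i)$ with $\cala_i := \{\ell_1,\ldots,\ell_i\}$. Concretely I would display a table listing the current exponents $\exp(\cala_{i-1})=\{1,a,b\}$, the equation of $\ell_i$, and the number $|\mathrm{Sing}(\cala_i)\cap\ell_i|$, verifying at every step that this number equals $a+1$ or $b+1$ so that $\cala_i$ is free with predictable exponents. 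The natural ordering is dictated by the rotational symmetry present in the construction, namely the $60^\circ$ and $120^\circ$ rotations of the ten seed lines of Figure~\ref{fig:tenlines} around the origin; processing the lines in symmetric orbits should keep the intersection counts readable directly from Figure~\ref{fig:A31_3}. The final row of the table should produce $\exp(\cala(31,3))=\{1,13,17\}$, matching the weak combinatorics of $\cala(31,2)$.

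For the failure $(J(\cala(31,3)))^{(3)}\not\subseteq (J(\cala(31,3)))^2$, the plan is computational. I would realize $J:=J(\cala(31,3))\subset S=\C[x,y,z]$ as the radical ideal of $\mathrm{Sing}(\cala(31,3))$, either by intersecting the pairwise line-intersection ideals and saturating or via the Jacobian ideal of $Q(\cala(31,3))$, then compute $J^{(3)}$ through primary decomposition, compute $J^{2}$, and test membership. The most transparent certificate is the exhibition of a homogeneous form $F\in J^{(3)}\setminus J^{2}$; equivalently, one may check whether $\alpha(J^{(3)}) < 2\alpha(J)$, in which case any minimal-degree element of $J^{(3)}$ already witnesses the non-containment. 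This is exactly the purpose of the \verb}SINGULAR} code promised at the end of the introduction, and I would rely on that script to finish this half of the argument.

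The main obstacle, I expect, is the first part. Unlike $\cala(31,2)$, which sits naturally over $\cala(19)$ and was processed inductively by extending Table~\ref{tab:19Free}, the ten-line seed of $\cala(31,3)$ is \emph{not} a subarrangement of $\cala(19)$ and does not belong to the family $\cala(12k+7)$, so the earlier chain cannot be recycled. Finding an ordering of all $31$ lines satisfying the strict hypothesis $|\mathrm{Sing}(\cala)\cap H|\in\{a+1,b+1\}$ at every step is combinatorially delicate: many natural orderings break this condition at some intermediate stage, and the correct ordering has to be located by inspection of the incidence structure, guided by the sixfold rotational symmetry and by careful bookkeeping of which intersection points fall on the next candidate line.
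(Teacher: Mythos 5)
Your overall two-pronged strategy (an addition--deletion table for inductive freeness, a computer-algebra check for the non-containment) matches the paper's, but there is a concrete gap in the first half, and it stems from a structural misreading. You assert that because the ten-line seed of $\cala(31,3)$ is not contained in $\cala(19)$, ``the earlier chain cannot be recycled'' and one must therefore find a fresh ordering of all $31$ lines starting from $\Phi_3$. This is exactly backwards: while the ten seed lines alone are not a subarrangement of $\cala(19)$, the full arrangement obtained after the $60^\circ$ and $120^\circ$ rotations \emph{does} contain $\cala(19,1)$ as a subarrangement, and the paper's proof consists precisely of appending twelve further lines ($ex+y\pm z$, $ex-y\pm z$, $2y\pm z$, $4x\pm ez$, $2x\pm 2ey\pm ez$) to the inductively free chain already built in Theorem~\ref{thFree1}, verifying Theorem~\ref{DiMar} at each of the twelve steps (Table~\ref{tab:31Free}). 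The obstacle you single out as the main difficulty — locating a valid ordering of all $31$ lines by inspection of the incidence structure — therefore does not arise; but since you neither notice this shortcut nor actually exhibit any working ordering and table, the freeness assertion remains unproved in your write-up. A plan that explicitly acknowledges it may fail at intermediate stages is not yet a proof.

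On the second half, your proposed computation (realize $J$ as the radical ideal of the singular locus, compare $J^{(3)}$ with $J^2$, or use $\alpha(J^{(3)})<2\alpha(J)$ as a witness) is sound in principle and close in spirit to what is actually done; note, however, that the paper does not carry out this computation in the text but defers to the reference \cite{JLM}, and the \texttt{SINGULAR} script in the Appendix is written for $\cala(31,2)$ (where it certifies that the containment \emph{does} hold), so it would need to be adapted to the point set of $\cala(31,3)$ rather than ``relied on'' as is.
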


\begin{proof}
To prove that the configuration 
$\cala(31,3)$ is inductively 
free, we will create a table 
containing the exponents of  
${\rm exp}(\cala^{'})$, 
the equations of lines that we add to the arrangement
$\cala(19,1)$, and then the 
exponents of ${\rm exp}(\cala^{''})$.

{\renewcommand{\arraystretch}{1.0}\begin{table}[h]
\begin{center}
\begin{tabular}{lll|lll}\\
\hline 
$\exp\:\cala^{'}$ &$\ell_i$ &  $\exp\:\cala^{''}$ &
$\exp\:\cala^{'}$ & $\ell_i$ &  $\exp\:\cala^{''}$
\\ \hline 
$\{1,8,11\}$ & $\ell_{20}: ex+y+z$ & $\{1,11\}$ & $\{1,12,13\}$ & $\ell_{26}: 4x+ez$ & $\{1,13\}$\\
$\{1,9,11\}$ & $\ell_{21}: ex+y-z$ & $\{1,11\}$ & $\{1,13,13\}$ & $\ell_{27}: 4x-ez$ & $\{1,13\}$ \\
$\{1,10,11\}$ & $\ell_{22}: ex-y+z$ & $\{1,11\}$ & $\{1,13,14\}$ & $\ell_{28}: 2x-2ey+ez$ & $\{1,13\}$\\
$\{1,11,11\}$ & $\ell_{23}: ex-y-z$ & $\{1,11\}$ & $\{1,13,15\}$ & $\ell_{29}: 2x-2ey-ez$ & $\{1,13\}$ \\
$\{1,11,12\}$ & $\ell_{24}: 2y-z$ & $\{1,11\}$ & $\{1,13,16\}$ & $\ell_{30}: 2x+2ey-ez$ & $\{1,13\}$\\
$\{1,11,13\}$ & $\ell_{25}: 2y+z$ & $\{1,13\}$  & $\{1,13,17\}$ & $\ell_{31}: 2x+2ey+ez$ & $\{1,13\}$\\
\hline
\end{tabular}
 \caption{.
 Transition from $\cala(19,1)$ to $\cala(31,3)$.
 }\label{tab:31Free}
\end{center}
\end{table}
}
Observe that each row in Table 
\ref{tab:31Free} allows us to verify condition (2) in
Definition \ref{def:IF}. Evidence of noncontainment  $(J(\cala(31,3)))^{(3)}\not\subseteq
(J(\cala(31,3)))^2$ has been provided in \cite{JLM} and due to this reason we refer to this paper for details. The verification was performed using {\tt Singular}. The affine part ($z=1$) of the element 
$F\in (J(\cala(31,3)))^{(3)}\setminus
(J(\cala(31,3)))^2$ is shown in 
Figure \ref{fig:Element}.
\end{proof}
\begin{figure}[htp]
    \centering
    \includegraphics[scale=0.6]{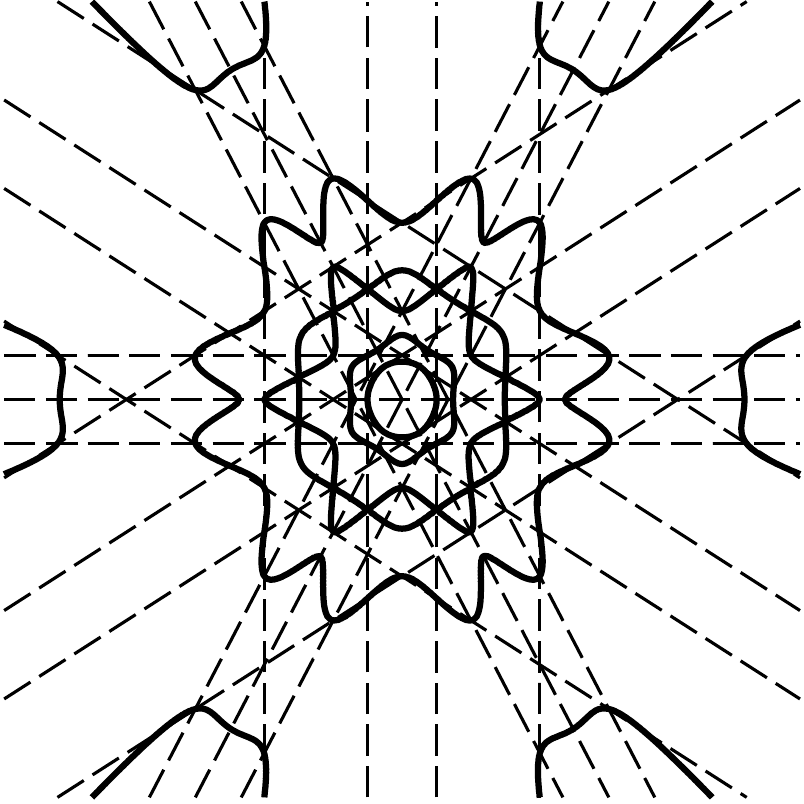}
    \caption{. The element 
    $F\in(J(\cala(31,3)))^{(3)}\setminus
(J(\cala(31,3)))^2$ consists of $21$
lines and~a~curve 
of  degree 12.
}
    \label{fig:Element}
\end{figure}

The next example illustrates the fact that being inductively free for an arrangement $\cala$ does not directly transfer into lack of containment $(J(\cala))^{(3)} \subseteq (J(\cala))^2$. 

\begin{theorem}\label{Tw:CompA31}
There are two non-isomorphic inductively free simplicial arrangements consisting of $31$ lines such that they have the same weak combinatorics, and having the property that for one arrangement the containment $(J(\cala))^{(3)}\subseteq (J(\cala))^2$ holds, not for the other. 
\end{theorem}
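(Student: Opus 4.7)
The plan is to take the two concrete arrangements already constructed in the paper, namely $\cala_1 := \cala(31,2)$ from Theorem \ref{thFree1} and $\cala_2 := \cala(31,3)$ from Theorem \ref{thm:A313IFcounter}. Both are simplicial (they appear as $A(31,2)$ and $A(31,3)$ in Grünbaum's catalogue \cite{Gru}) and inductive freeness has been verified in the two preceding theorems, so three of the four requirements — simpliciality, inductive freeness, and the fact that one side violates the containment — come for free from earlier in the paper. What remains is to check that the weak combinatorics coincide and that the two arrangements are not isomorphic, and to confirm the containment $(J(\cala_1))^{(3)}\subseteq (J(\cala_1))^2$ for the ``good'' side.

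The first concrete step is to compute the weak combinatorial type, i.e.\ the vector $(t_2,t_3,t_4,\ldots)$ counting the number of intersection points of each multiplicity, for both $\cala_1$ and $\cala_2$ using the explicit line equations (Table \ref{tb:A12k7} for $\cala(31,2)$ and the rotational construction built on Figure \ref{fig:tenlines} for $\cala(31,3)$). This is a finite enumeration best carried out in a computer algebra system; the outcome one must verify is that the same multiplicity vector arises in both cases, which is the ``same weak combinatorics'' requirement in the statement.

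Next, I would argue non-isomorphism. Since Grünbaum's catalogue distinguishes the two labels $A(31,2)$ and $A(31,3)$, the arrangements are combinatorially inequivalent, but for a self-contained argument I would compare a finer invariant than the multiplicity vector: for each line $\ell$ of the arrangement, record the sequence of multiplicities of the singular points lying on $\ell$ (the ``line incidence type''), and show that the multisets of such sequences differ between $\cala_1$ and $\cala_2$. Any isomorphism of arrangements would induce a bijection of the underlying line sets preserving this data, so exhibiting a single line type present in one arrangement but not the other rules out isomorphism. This local check is again mechanical given the explicit line equations.

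Finally, for the containment $(J(\cala_1))^{(3)}\subseteq (J(\cala_1))^2$, I would apply the \texttt{Singular} code recorded at the end of the paper to the radical ideal of the singular locus of $\cala(31,2)$ and verify that every generator of $(J(\cala_1))^{(3)}$ lies in $(J(\cala_1))^2$; the complementary noncontainment $(J(\cala_2))^{(3)}\not\subseteq (J(\cala_2))^2$ is exactly Theorem \ref{thm:A313IFcounter}. Combining these four points yields the theorem. The main obstacle is really the non-isomorphism step: since the weak combinatorics agree by design, one must locate the genuinely distinguishing combinatorial feature, and a careful comparison of the line incidence types (or, equivalently, of the full intersection lattices) is the cleanest route; without invoking Grünbaum's classification, producing this distinguishing feature by hand for arrangements of $31$ lines is the most delicate piece of bookkeeping in the argument.
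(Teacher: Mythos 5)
Your proposal is correct and follows essentially the same route as the paper: it uses the same pair $\cala(31,2)$, $\cala(31,3)$, draws simpliciality, inductive freeness and the noncontainment for $\cala(31,3)$ from the preceding theorems, verifies the shared weak combinatorics $(t_2,t_3,t_4,t_5,t_6,t_8)=(54,42,21,6,1,3)$ by direct enumeration, and checks $(J(\cala(31,2)))^{(3)}\subseteq (J(\cala(31,2)))^2$ with the \texttt{Singular} script. The only divergence is that the paper simply cites Gr\"unbaum's catalogue for non-isomorphism, whereas you sketch a self-contained comparison of line incidence types; that is a reasonable refinement (with the caveat that this invariant is strictly weaker than the full intersection lattice, so if it failed to distinguish the two one would have to fall back on the lattices or the catalogue, exactly as the paper does).
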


In other words, the weak combinatorics of line arrangements
does not determine the property of being an example for the non-containment. For the clarity of the exposition, let us recall that for an arrangement $\mathcal{A}$ of $d$ lines in the plane, the weak combinatorics is the vector of the form $(d; t_{2}, ..., t_{d})$ with $t_{i}$ being the number of $i$-fold intersection points in $\mathcal{A}$.

It is worth noticing that Theorem \ref{Tw:CompA31} should be compared with a result from \cite{FarKabBacTut2019}, where the authors observed a similar phenomenon, but in a different setting, namely in the case of real line arrangements possessing the maximal possible number of triple intersection points. 

Let us present our proof of Theorem \ref{Tw:CompA31}.
\begin{proof}
The arrangement $\cala(31,3)$ 
is constructed from configuration 
$\cala(19,1)$ by adding $12$ appropriately chosen lines,  
according to Table \ref{tab:31Free}. The 
arrangements $\cala(31,2)$ and 
$\cala(31,3)$ are not isomorphic 
simplicial arrangements of lines, and this fact is proved in \cite{Gru}, even though they have the same weak combinatorics, namely:
\[t_2=54,\:t_3=42,\:t_4=21,
\:t_5=6,\:t_6=1,\:t_8=3, \]
and $t_i=0$ for others.

Let us pass to the containment question $(J(\cala))^{(3)}\subseteq 
(J(\cala))^2$. In Theorem \ref{thm:A313IFcounter}, we explained the non-containment for the singular locus of $\mathcal{A}(31,3)$, and this check was done using \verb}SINGULAR}. In the case of $\mathcal{A}(31,2)$, we can preform exactly the same computations showing that the containment
 $$(J(\cala(31,2))^{(3)}\subseteq (J(\cala(31,2))^2$$
 does hold, which completes the proof.
\end{proof}
In Appendix, you can find a script that can be run in \verb}SINGULAR} which allows us to verify the containment $(J(\cala(31,2))^{(3)}\subseteq (J(\cala(31,2))^2$.

\begin{remark}
Let us point our here that there is a way to extend the class of inductively free arrangements $\calaif$, namely we can add the following condition (\cite[Definition 4.60 (3)]{OrlTer92}):
\begin{quote}
if there exists $H\in\cala$ such that $\cala''\in\calaif $, 
		$\cala\in\calaif$, and ${\rm exp}(\cala'')\subset {\rm exp}(\cala)$, then~ $\cala'\in\calaif$,    
\end{quote} 
then we come to the class of recursively free hyperplanes
arrangements $\calarf$. 

It is known that we have the following relations (see \cite{CuntzHoge,terao80,Zigler}) 
$$\text{inductively free } \subsetneq \text{ recursively free } \subsetneq \text{ free}.$$ 
It turns out that our example of a pair of 
arrangements $\cala(31,2)$ and $\cala(31,3)$ allows us to answer a question posed by Drabkin and Seceleanu in the negative (\cite[Question 6.8]{DraSec}). More precisely, our example shows that the containment $J(\mathcal{A})^{(3)} \subset J(\mathcal{A})^{2}$ does not hold for recursively free line arrangements $\mathcal{A}$. It is still an open question if we can find a configuration of lines which is recursively free, but not inductively free and gives negative answer.
\end{remark}

\paragraph*{Acknowledgments.}
I would like to thank Grzegorz Malara and 
Piotr Pokora for all their help, valuable 
comments and inspiring discussions.
\paragraph*{Appendix.}

\hspace*{10ex}
\hrule
\begin{verbatim}
proc PtsIdeal(poly p, poly q, poly r) {
	matrix M[2][3]=p,q,r,x,y,z;
	ideal @I=minor(M,2);
	return(std(@I));
}
option(redSB);
ring R=(0,e),(x,y,z),dp;
minpoly=e2-3;

/* The list L contains the coordinates of the singular points 
of the arrangement A(31,2). */

"loading arrangement A(31,2)...";
list L=
(-7/2e),-1/2,1,(7/2e),-1/2,1,(7/2e),1/2,1,(-7/2e),1/2,1,
(-3/2e),-11/2,1,(2e),5,1,(3/2e),11/2,1,(-2e),-5,1,
(3/2e),-11/2,1,(-2e),5,1,(-3/2e),11/2,1,(2e),-5,1,
(3/4e),5/4,1,(1/4e),7/4,1,(1/4e),-7/4,1,(3/4e),-5/4,1,
(-3/4e),5/4,1,(-1/4e),7/4,1,(-1/4e),-7/4,1,(-3/4e),-5/4,1,
(-e),-1/2,1,(-e),1/2,1,(e),-1/2,1,(e),1/2,1,
(5/2e),-1/2,1,(-3/2e),7/2,1,(5/2e),1/2,1,(-3/2e),-7/2,1,
(-5/2e),-1/2,1,(3/2e),7/2,1,(-5/2e),1/2,1,(3/2e),-7/2,1,
(-e),-4,1,(-e),4,1,(e),4,1,(e),-4,1,
(3/2e),5/2,1,(-2e),-1,1,(-3/2e),5/2,1,(2e),-1,1,
(-3/2e),-5/2,1,(2e),1,1,(3/2e),-5/2,1,(-2e),1,1,
(-1/2e),-7/2,1,(-1/2e),7/2,1,(1/2e),7/2,1,(1/2e),-7/2,1,
(-3/2e),-1/2,1,(e),2,1,(3/2e),-1/2,1,(-e),2,1,
(3/2e),1/2,1,(-e),-2,1,(-3/2e),1/2,1,(e),-2,1,
(-1/2e),5/2,1,(-1/2e),-5/2,1,(1/2e),5/2,1,(1/2e),-5/2,1,
(3/2e),3/2,1,(-3/2e),-3/2,1,(3/2e),-3/2,1,(-3/2e),3/2,1,
0,3,1,0,-3,1,(-1/4e),-1/4,1,(1/4e),1/4,1,
(1/4e),-1/4,1,(-1/4e),1/4,1,0,-1/2,1,0,1/2,1,
(-e),-1,1,(e),1,1,(-e),1,1,(e),-1,1,
0,2,1,0,-2,1,(-1/2e),-1/2,1,(1/2e),1/2,1,
(-1/2e),1/2,1,(1/2e),-1/2,1,0,1,1,0,-1,1,
(3/2e),0,1,(-3/2e),0,1,(3/4e),9/4,1,(-3/4e),-9/4,1,
(3/4e),-9/4,1,(-3/4e),9/4,1,(3e),0,1,(-3e),0,1,
(-3/2e),-9/2,1,(3/2e),9/2,1,(-3/2e),9/2,1,(3/2e),-9/2,1,
(-1/3e),0,1,(1/3e),0,1,(-1/6e),-1/2,1,(1/6e),1/2,1,
(1/6e),-1/2,1,(-1/6e),1/2,1,(2e),0,1,(-2e),0,1,
(-e),-3,1,(e),3,1,(-e),3,1,(e),-3,1,
(-1/2e),0,1,(1/2e),0,1,(1/4e),3/4,1,(-1/4e),-3/4,1,
(-1/4e),3/4,1,(1/4e),-3/4,1,(e),0,1,(-e),0,1,
(-1/2e),-3/2,1,(1/2e),3/2,1,(-1/2e),3/2,1,(1/2e),-3/2,1,
0,0,1,(e),1,0,(-e),1,0,0,1,0,-1,0,0,1,(e),0,-1,(e),0;

"generating ideals I^(3) and I^2...";
ideal I=1; ideal I3=1;
for(int i=1;i<=(size(L) div 3);i++){
	I=intersect(I,PtsIdeal(L[3*i-2],L[3*i-1],L[3*i]));
	I3=intersect(I3,(PtsIdeal(L[3*i-2],L[3*i-1],L[3*i]))^3);
	if((i mod 10) == 0){ string(i)+" points of "
        +string(size(L) div 3)+" in total used";}
}
I=std(I^2); I3=std(I3);

"number of generators of I^(3) not in I^2: "+string(size(NF(I3,I)));
\end{verbatim}
\hrule


\vskip 0.5 cm
\bigskip
Marek Janasz,
Department of Mathematics,
Pedagogical University of Krakow,\\
Podchor\c a\.zych 2,
PL-30-084 Krak\'ow, Poland. \\
\nopagebreak
\textit{E-mail address:} \texttt{marek.janasz@up.krakow.pl}
\end{document}